\def\pmod #1{\ ({\rm{mod}}\ #1)}
\def\Z{\Bbb Z}
\def\Q{\Bbb Q}
\def\bg{\bigg}
\def\({\bg(}
\def\){\bg)}
\def\sgn{{\rm sgn}}
\def\sgn{{\rm sgn}}
\def\Arg{{\rm Arg}}
\def\Norm{{\rm Norm}}
\def\Gal{{\rm Gal}}
\def\ve{\varepsilon}
\theoremstyle{plain}
\newtheorem{theorem}{Theorem}
\newtheorem{lemma}{Lemma}
\newtheorem{corollary}{Corollary}
\theoremstyle{definition}
\theoremstyle{remark}
\newtheorem{remark}{Remark}
\begin{document}
 \baselineskip=17pt
\hbox{} {}
\medskip
\title[On a polynomial involving roots of unity and its applications]
{On a polynomial involving roots of unity and its applications}
\date{}
\author[Hai-Liang Wu and Yue-Feng She] {Hai-Liang Wu and Yue-Feng She}

\thanks{2010 {\it Mathematics Subject Classification}.
Primary 11R18; Secondary 11R11, 11R27.
\newline\indent {\it Keywords}. Quadratic fields, fundamental units, cyclotomic fields.
\newline \indent Supported by the National Natural Science
Foundation of China (Grant No. 11971222).}

\address {(Hai-Liang Wu) Department of Mathematics, Nanjing
University, Nanjing 210093, People's Republic of China}
\email{{\tt whl.math@smail.nju.edu.cn}}

\address {(Yue-Feng She) Department of Mathematics, Nanjing
University, Nanjing 210093, People's Republic of China}
\email{{\tt she.math@smail.nju.edu.cn}}

\begin{abstract}
Let $p>3$ be a prime. Gauss first introduced the polynomial $S_p(x)=\prod_{c}(x-\zeta_p^c),$
where $0<c<p$ and $c$ varies over all quadratic residues modulo $p$ and $\zeta_p=e^{2\pi i/p}$. Later Dirichlet investigated this polynomial and used this to solve the problems involving the Pell equations. Recently, Z.-W Sun studied some trigonometric identities involving this polynomial. In this paper, we generalized their results. As applications of our result, we extend S. Chowla's result on the congruence concerning the fundamental unit of $\mathbb{Q}(\sqrt{p})$ and give an equivalent form of the
extended Ankeny-Artin-Chowla conjecture.
\end{abstract}

\maketitle

\section{Introduction}
\setcounter{lemma}{0}
\setcounter{theorem}{0}
\setcounter{corollary}{0}
\setcounter{remark}{0}
\setcounter{equation}{0}
\setcounter{conjecture}{0}
Let $p>3$ be a prime, and let $\zeta_p=e^{2\pi i/p}$.
In his outstanding book {\it Disquisitiones Arithmeticae} Gauss first introduced the following two polynomials.
\begin{align*}
S_p(x)&:=\prod_{c}(x-\zeta_p^c),\\
T_p(x)&:=\prod_{b}(x-\zeta_p^b),
\end{align*}
where $c$ varies over all quadratic residues modulo $p$ in the interval $[1,p]$ and $b$ runs over all quadratic non-residues modulo $p$ in the interval $[1,p]$. Gauss proved that there are certain polynomials $Y_p(x)$,$Z_p(x)$ having rational integral coefficients such that
$$2S_p(x)=Y_p(x)+\sqrt{p^*}Z_p(x),\ 2T_p(x)=Y_p(x)-\sqrt{p^*}Z_p(x),$$
where $p^*=(-1)^{\frac{p-1}{2}}p.$ And he also showed that
$$4\Phi_p(x)=Y_p(x)^2-p^*Z_p(x)^2,$$
where $\Phi_p(x)$ is the $p$-th cyclotomic polynomial. In 1811 Gauss applied these polynomials in his determination of the sign of the Gaussian sum. Later
Dirichlet observed that if $p\equiv3\pmod4$ and $p>3$, then we have
\begin{equation}\label{equation Dirichlet}
S_p(i)=\frac{1}{2}m_p(1+(-1)^{(p+1)/4}i)+\frac{1}{2}n_p(1-(-1)^{(p+1)/4}i)\sqrt{-p}
\end{equation}
with some rational integer $m_p,n_p$ (readers may consult \cite[p. 371]{D} for more details).
However, Dirichlet did not give the explicit value of $S_p(i)$.

Recently, Z.-W Sun \cite{Sun2} investigated many trigonometric identities. In particular, when $p\equiv1\pmod 4$ be a prime, he obtained the value of $S_p(i)$. In the case $p\equiv3\pmod4$, Z.-W Sun
also posed a conjecture concerning the explicit value of $S_p(i)$. More precisely, 
let $\ve_{4p}>1$ be the fundamental unit of $\Q(\sqrt{p})$. In \cite[Conjecture 5.1]{Sun2} he conjectured that if we let $\ve_{4p}^{h(4p)}=A_p+B_p\sqrt{p}$ with $A_p,B_p\in\Z$, then we have 
$$(i-(-1)^{\frac{p+1}{4}})S_p(i)=(-1)^{\frac{h(-p)+1}{2}\cdot\frac{p+1}{4}}(s_p-t_p\sqrt{p}),$$
where $s_p=\sqrt{A_p+(-1)^{\frac{p+1}{4}}}$, $t_p=B_p/s_p$ and $h(-p)$ denotes the class number of $\Q(\sqrt{-p})$. 

For all positive odd integers $n$, we let $(\frac{\cdot}{n})$ be the Jacobi symbol and let $\zeta_n=e^{2\pi i/n}$. Motivated by the above works, we introduce the following polynomial:
$$S_n(x)=\prod_{c}(x-\zeta_n^c),$$
where $c$ varies over all integers in the interval $[1,n]$ satisfying $(\frac{c}{n})=1$. Let $n>3$ be an arbitrary squarefree integer with $n\equiv3\pmod4$. In this paper, we first determine the explicit value of $S_n(i)$ completely.

To state our first result, we first introduce some notations.
The symbol $\#S$ denotes the cardinality of a finite set $S$. Let
$n>3$ be an arbitrary squarefree integer with $n\equiv3\pmod4$. We first set
$$\alpha(n):=\#\{0<c<\frac{n}{8}:\ \(\frac{c}{n}\)=1\}\cup\{\frac{5n}{8}<c<n:\ \(\frac{c}{n}\)=1\}.$$
In addition, if $n=p$ is a prime, then we let
$$\beta(p):=1+\#\{\frac{p}{8}<c<\frac{3p}{8}: \(\frac{c}{p}\)=-1\}.$$
It might worth mentioning here that when $p\equiv7\pmod8$, then we have 
$$\beta(p)=1+\left\lfloor\frac{p}{8}\right\rfloor-\frac{h(-8p)}{4},$$
where $\lfloor\cdot\rfloor$ is the floor function and $h(-8p)$ is the class number of $\Q(\sqrt{-2p})$. 
In fact, by \cite{HW} we have the following formula 
$$h(-8p)=2\sum_{p/8<x<3p/8}\(\frac{x}{p}\).$$
From this one can easily verify that 
$$\beta(p)=1+\left\lfloor\frac{p}{8}\right\rfloor-\frac{h(-8p)}{4}.$$
Throughout this paper, for an arbitrary real quadratic field $\Q(\sqrt{D})$ of discriminant $D$, we let
$\ve_D>1$ and $h(D)$ be the fundamental unit and the class number of $\Q(\sqrt{D})$ respectively.
As usual, the Euler totient function is denoted by $\phi$. Now we are in the position to state our first result.
\begin{theorem}\label{Theorem A}
Let $n>3$ be a squarefree integer with $n\equiv3\pmod4$. Then we have

{\rm (i)} If $n$ is not a prime, then we have
$$S_n(i)=(-1)^{\frac{\phi(n)}{8}+\alpha(n)}\cdot\ve_{4n}^{-h(4n)/2}.$$
In this case, we can write
$$S_n(i)=a_n+b_n\sqrt{n}$$
for some $a_n,b_n\in\Z$. And $(a_n,b_n)$ is a solution of the Pell equation
$$x^2-ny^2=1\ (xy\ne0).$$

{\rm (ii)} If $n=p$ is a prime, then we have
$$S_p(i)=(-1)^{\beta(p)}\cdot\ve_{4p}^{-h(4p)/2}\cdot(1+i(-1)^{(p+1)/4})/\sqrt{2}.$$
In this case, we can write
$$(i-(-1)^{(p+1)/4})S_p(i)=a_p+b_p\sqrt{p}$$
for some $a_p,b_p\in\Z$. And $(a_p,b_p)$ is a solution of the equation
$$x^2-py^2=\(\frac{2}{p}\)2.$$
\end{theorem}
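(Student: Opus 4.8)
The plan is to pin down $S_n(i)$ by computing its absolute value and its argument separately, and then to read off from these the quadratic field in which it lives, the exact power of the fundamental unit, and the sign. Throughout set $T_n(x)=\prod_b(x-\zeta_n^b)$, with $b$ running over $1\le b\le n$ satisfying $(\frac bn)=-1$, so that $S_n(x)T_n(x)=\Phi_n(x)$, and work inside $\Q(\zeta_{4n})=\Q(\zeta_n,i)$, whose Galois group over $\Q$ is $(\Z/4n\Z)^{*}\cong\{\pm1\}\times(\Z/n\Z)^{*}$. The biquadratic field $K:=\Q(i,\sqrt n)=\Q(i,\sqrt{-n})$ has quadratic subfields $\Q(i)$, $\Q(\sqrt{-n})$, $\Q(\sqrt n)$, cut out respectively by $\chi_4$, $\chi_n=(\tfrac\cdot n)$ and $\chi_{4n}=\chi_4\chi_n$, the last being the quadratic character of $\Q(\sqrt n)$. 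A direct check shows $\overline{S_n(i)}=T_n(-i)$ and that the conjugates of $S_n(i)$ over $\Q$ are $S_n(\pm i)$ and $T_n(\pm i)$; in particular $S_n(i)\in K$.

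For the absolute value: $|i-\zeta_n^c|^2=|1-\zeta_{4n}^{\,n-4c}|^2=4\sin^2\frac{\pi(n-4c)}{4n}$, and $c\mapsto n-4c$ is a bijection of $\{1\le c\le n:(\tfrac cn)=1\}$ onto $\{r\bmod 4n:\gcd(r,4n)=1,\ \chi_4(r)=-1,\ \chi_{4n}(r)=1\}$; pairing each such $r$ with $4n-r$ (which lands in the part with $\chi_4=1$) gives
$$|S_n(i)|^2=4^{\phi(n)/2}\prod_{\substack{0<s<4n\\ \chi_{4n}(s)=1}}\sin\frac{\pi s}{4n}.$$
Now Dirichlet's class number formula for the real quadratic field $\Q(\sqrt n)$ of discriminant $4n$ reads $\ve_{4n}^{\,2h(4n)}=\big(\prod_{\chi_{4n}(s)=-1}\sin\frac{\pi s}{4n}\big)\big/\big(\prod_{\chi_{4n}(s)=1}\sin\frac{\pi s}{4n}\big)$, while $\prod_{\gcd(s,4n)=1}\sin\frac{\pi s}{4n}=|\Phi_{4n}(1)|\,2^{-\phi(4n)}=2^{-2\phi(n)}$ because $\Phi_{4n}(1)=1$ ($4n$ is not a prime power). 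Combining these yields $|S_n(i)|^2=\ve_{4n}^{-h(4n)}$; note $N(\ve_{4n})=1$, since $-1$ is a nonresidue modulo a prime divisor of $n$.

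For the argument: an elementary computation gives, for $0<c<n$, $\arg(i-\zeta_n^c)=\frac{3\pi}{4}+\frac{\pi c}{n}+\pi\,[\,c>n/4\,]$, hence $\arg S_n(i)\equiv\frac{3\pi\phi(n)}{8}+\frac{\pi}{n}\sum_{(\frac cn)=1}c+\pi N_1\pmod{2\pi}$, where $N_1=\#\{c:(\tfrac cn)=1,\ n/4<c<n\}$. The class number formula for $\Q(\sqrt{-n})$ of discriminant $-n$ gives $\sum_{a=1}^{n-1}a\,\chi_n(a)=-n\,h(-n)$, hence $\sum_{(\frac cn)=1}c=\frac{n\phi(n)}{4}-\frac{n\,h(-n)}{2}$ and
$$\arg S_n(i)\equiv\tfrac{\pi}{2}\Big(\tfrac{5\phi(n)}{4}-h(-n)\Big)+\pi N_1\pmod{2\pi}.$$
If $n$ is composite then $8\mid\phi(n)$ (elementary) and $h(-n)$ is even (genus theory: $-n$ has $\omega(n)\ge2$ prime discriminant factors), so $\arg S_n(i)\in\pi\Z$; thus $S_n(i)\in K\cap\R=\Q(\sqrt n)$, and since $\mathcal O_{\Q(\sqrt n)}=\Z[\sqrt n]$ we may write $S_n(i)=a_n+b_n\sqrt n$ with $a_n,b_n\in\Z$. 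As $h(4n)$ is even (genus theory again) and $S_n(i)^2=\ve_{4n}^{-h(4n)}$, we get $S_n(i)=\pm\ve_{4n}^{-h(4n)/2}$, whence $a_n^2-nb_n^2=N_{\Q(\sqrt n)/\Q}(S_n(i))=N(\ve_{4n})^{-h(4n)/2}=1$ with $a_nb_n\ne0$. If $n=p$ is prime, Dirichlet's relation \eqref{equation Dirichlet} shows $\mathrm{Im}\,S_p(i)=\ve_p\,\mathrm{Re}\,S_p(i)$ with $\ve_p=(-1)^{(p+1)/4}$, i.e. $S_p(i)=\rho_p(1+\ve_p i)$ with $2\rho_p=m_p+n_p\ve_p\sqrt p\in\Z[\sqrt p]$; then $|S_p(i)|^2=2\rho_p^2=\ve_{4p}^{-h(4p)}$ forces $S_p(i)=\pm\ve_{4p}^{-h(4p)/2}(1+\ve_p i)/\sqrt2$, and $(i-\ve_p)S_p(i)=-2\ve_p\rho_p=-\ve_p m_p-n_p\sqrt p=a_p+b_p\sqrt p\in\Z[\sqrt p]$, with $a_p^2-pb_p^2=4N_{\Q(\sqrt p)/\Q}(\rho_p)=\pm2$; reducing modulo $p$ forces the sign to be $(\tfrac2p)$, since $2$ is a residue mod $p$ exactly for $p\equiv7\pmod8$ and $-2$ exactly for $p\equiv3\pmod8$. (The argument formula above also reproduces the direction $1+\ve_p i$ directly from the two cases $p\equiv3,7\pmod8$, and yields the location via the fact that $\Q(\zeta_{4p})^{+}$, being cyclic of degree $p-1$ with $4\nmid p-1$, has no quartic subfield.)

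What remains is to identify the signs, which is the main obstacle. We have obtained $S_n(i)=(-1)^{E}\ve_{4n}^{-h(4n)/2}$ in case (i) with $E\equiv\frac{5\phi(n)}{8}-\frac{h(-n)}{2}+N_1\pmod2$, and $S_p(i)=(-1)^{E'}\ve_{4p}^{-h(4p)/2}(1+\ve_p i)/\sqrt2$ in case (ii) with $E'$ read off from $\arg S_p(i)\bmod 2\pi$; one must show $E\equiv\frac{\phi(n)}{8}+\alpha(n)$ and $E'\equiv\beta(p)$ modulo $2$. Since $\frac{\phi(n)}{2}$ is even, the first amounts to the congruence $\alpha(n)\equiv N_1-\frac{h(-n)}{2}\pmod2$. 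I would prove this by subdividing $(0,n)$ at the points $\frac n8,\frac n4,\frac{3n}8,\dots$ and relating the counts $\#\{c:(\tfrac cn)=1,\ c\text{ in a given subinterval}\}$ to one another through the class number formulas attached to the discriminants $-n$, $-4n$ and $-8n$ — for $n=p$ prime the last is precisely the formula $h(-8p)=2\sum_{p/8<x<3p/8}(\tfrac xp)$ recorded in the Introduction, which also produces the stated alternative $\beta(p)=1+\lfloor p/8\rfloor-h(-8p)/4$ when $p\equiv7\pmod8$. Everything outside this last step is forced by Dirichlet's class number formula and elementary trigonometry; reconciling the parity of $\arg S_n(i)$ with the combinatorial quantities $\alpha(n)$ and $\beta(p)$ — that is, controlling how the quadratic residues distribute among the dyadic subintervals of $(0,n)$ — is where the genuine work lies.
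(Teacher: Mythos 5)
Your overall route is genuinely different from the paper's, and a large part of it is sound. You obtain $|S_n(i)|^2=\ve_{4n}^{-h(4n)}$ from the sine form of the class number formula for discriminant $4n$ together with $\Phi_{4n}(1)=1$, and you compute $\arg S_n(i)$ directly via the class number formula for discriminant $-n$; the paper instead massages the cotangent product into the identity $\ve_{4n}^{h(4n)}=i^{\phi(n)/2}(-1)^{\delta(4n)+\lambda(4n)}S_n(-i)/S_n(i)$, proves a Galois lemma (Lemma \ref{Lemma of Galois}) to place $\gamma(4n)S_n(i)$ in $\Z[\sqrt{n}]$, and then fixes the sign by applying $\tau_{n+2}$ and counting the signs of $1-\cot(2\pi c/n)$, which is exactly where $\alpha(n)$ and $\beta(p)$ enter. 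Your modulus computation, the containment $S_n(i)\in\Q(i,\sqrt n)$, the reality of $S_n(i)$ for composite $n$, the shape $\rho_p(1+(-1)^{(p+1)/4}i)$ in the prime case (where you lean on Dirichlet's identity \eqref{equation Dirichlet}, which the paper quotes without proof and instead establishes through Lemma \ref{Lemma of Galois}), and the equations $a_n^2-nb_n^2=1$ and $a_p^2-pb_p^2=(\frac{2}{p})2$ are all correctly argued.

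The genuine gap is the determination of the signs, which is the actual content of the theorem. Your computation yields the sign exponent $E\equiv\frac{5\phi(n)}{8}-\frac{h(-n)}{2}+N_1\pmod 2$ (with $N_1$ the number of $c$ with $(\frac{c}{n})=1$ in $(n/4,n)$), and in the prime case an $E'$ you never even write out explicitly; to prove the theorem you must show $\alpha(n)\equiv N_1-\frac{h(-n)}{2}\pmod 2$ and $E'\equiv\beta(p)\pmod 2$. At this point you only say you ``would prove this by subdividing $(0,n)$'' and invoking Dirichlet-type formulas for the discriminants $-n$, $-4n$, $-8n$ --- an intention, not a proof, and you yourself flag it as where the genuine work lies. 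Moreover the plan is not routine as stated: the Hudson--Williams formula recorded in the Introduction is cited only for primes $p\equiv7\pmod 8$ (for $p\equiv3\pmod8$ and, worse, for composite squarefree $n$, the corresponding quarter- and eighth-interval character sums are not plain class numbers but carry extra factors depending on $(\frac{2}{n})$ and the prime splitting of $n$), so bridging your $N_1$ and $h(-n)$ to the eighth-interval counts $\alpha(n)$, $\beta(p)$ requires precisely the kind of analysis the paper performs when it derives \eqref{equation of the argument when n is not a prime} and \eqref{equation of the argument when n is a prime}. Until these two congruences are proved, your argument determines $S_n(i)$ only up to an unverified sign and does not establish the stated formulas $(-1)^{\frac{\phi(n)}{8}+\alpha(n)}\ve_{4n}^{-h(4n)/2}$ and $(-1)^{\beta(p)}\ve_{4p}^{-h(4p)/2}(1+i(-1)^{(p+1)/4})/\sqrt2$.
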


To make our result more explicit, we give the following examples.

(i) When $n=15$, we have $h(60)=2$, $\ve_{60}=4+\sqrt{15}$, $\alpha(15)=1$ and $\phi(15)=8$. With the help of computer we obtain
$$S_{15}(i)=4-\sqrt{15}=(-1)^{\phi(15)/8+J(15)}\cdot\ve_{60}^{-h(60)/2}.$$

(ii) When $n=7$, we have $h(28)=1$, $\ve_{28}=8+3\sqrt{7}$ and $\beta(7)=1$. By computation we obtain
$$S_7(i)=\frac{-3+\sqrt{7}-i\sqrt{16-6\sqrt{7}}}{2},$$
and
$$(i-1)S_7(i)=\sqrt{\frac{2}{8+3\sqrt{7}}}=3-\sqrt{7}.$$

(iii) When $n=11$, we have $h(44)=1$, $\ve_{44}=10+3\sqrt{11}$ and $\beta(11)=2$. Via computation we obtain
$$S_{11}(i)=\frac{-3+\sqrt{11}-i\sqrt{20-6\sqrt{11}}}{2},$$
and
$$(i+1)S_{11}(i)=\sqrt{\frac{2}{10+3\sqrt{11}}}=-3+\sqrt{11}.$$
Now we introduce our second result. Mordell \cite{M} proved that if $p>3$ is a prime and $p\equiv 3\pmod4$ then
$$\(\frac{p-1}{2}\)!\equiv (-1)^{\frac{h(-p)+1}{2}}\pmod p,$$
where $h(-p)$ denotes the class number of $\Q(\sqrt{-p})$. Later S. Chowla \cite{Chowla} extended Mordell's result. Let $p\equiv1\pmod4$ be a prime, and let $\ve_{p}=(u_p+v_p\sqrt{p})/2>1$ be the fundamental unit of $\Q(\sqrt{p})$. Then S. Chowla showed that
$$\(\frac{p-1}{2}\)!\equiv \frac{(-1)^{(h(p)+1)/2}u_p}{2}\pmod p.$$

In view of the above, let $p>3$ be a prime with $p\equiv3\pmod4$, and let $\ve_{4p}=u_p+v_p\sqrt{p}>1$ be the fundamental unit of $\Q(\sqrt{p})$.
It is natural to investigate $u_p\mod p$. We obtain the following result which extends S. Chowla's result.

\begin{corollary}\label{Corollary A}
Let $p>3$ be a prime with $p\equiv3\pmod4$, and let $\ve_{4p}=u_p+v_p\sqrt{p}>1$ be the fundamental unit of $\Q(\sqrt{p})$. Then we have
$$u_p\equiv (-1)^{\frac{p+1}{4}}\pmod p.$$
\end{corollary}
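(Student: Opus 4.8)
Write $\delta:=(-1)^{(p+1)/4}\in\{\pm1\}$. The plan is to compress the two assertions of part~(ii) of Theorem~\ref{Theorem A} into a single identity of rational integers, and then to read that identity modulo the ramified prime above $p$ in $\Q(\sqrt p)$. Concretely, I would first multiply the relation $(i-\delta)S_p(i)=a_p+b_p\sqrt p$ by the explicit value of $S_p(i)$ supplied by the theorem. Using $(i-\delta)(1+\delta i)=-2\delta$ together with $\delta^2=1$, this gives
$$a_p+b_p\sqrt p=-(-1)^{\beta(p)}\,\delta\,\sqrt2\;\ve_{4p}^{-h(4p)/2},$$
and squaring removes every sign and the half-integer exponent, leaving $(a_p+b_p\sqrt p)^2=2\,\ve_{4p}^{-h(4p)}$. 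Since $p\equiv3\pmod{4}$ the equation $x^2-py^2=-1$ is insoluble, so $N(\ve_{4p})=u_p^2-pv_p^2=1$; writing $\ve_{4p}^{h(4p)}=A_p+B_p\sqrt p$ with $A_p,B_p\in\Z$ we then have $\ve_{4p}^{-h(4p)}=A_p-B_p\sqrt p$. Comparing the rational and $\sqrt p$-parts of $(a_p^2+pb_p^2)+2a_pb_p\sqrt p=2A_p-2B_p\sqrt p$ gives $a_p^2+pb_p^2=2A_p$, and adding this to the relation $a_p^2-pb_p^2=2\left(\frac2p\right)$ from Theorem~\ref{Theorem A}(ii) produces the clean identity
$$a_p^2=A_p+\left(\frac2p\right).$$

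The next step is reduction modulo $p$. From $a_p^2-pb_p^2=2\left(\frac2p\right)$ one gets $a_p^2\equiv2\left(\frac2p\right)\pmod{p}$, so the displayed identity forces $A_p\equiv\left(\frac2p\right)\pmod{p}$. On the other hand, since $p\equiv3\pmod{4}$ the ring of integers of $\Q(\sqrt p)$ is $\Z[\sqrt p]$, and reducing $\ve_{4p}^{h(4p)}=(u_p+v_p\sqrt p)^{h(4p)}$ modulo the ramified prime $(\sqrt p)$ --- whose residue field is $\F_p$, with $\sqrt p\mapsto0$ --- gives $A_p\equiv u_p^{h(4p)}\pmod{p}$. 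Here I would invoke the classical fact that $h(4p)$ is odd for every prime $p\equiv3\pmod{4}$: by genus theory $h^+(4p)=2h(4p)$ because $N(\ve_{4p})=1$, while the narrow class group of $\Q(\sqrt p)$ has $2$-rank $1$ (two genera) and $4$-rank $0$ (a short R\'edei-matrix computation), so $h^+(4p)/2=h(4p)$ is odd. Combined with $u_p^2\equiv1\pmod{p}$, hence $u_p\equiv\pm1\pmod{p}$, this gives $u_p^{h(4p)}\equiv u_p\pmod{p}$, so $u_p\equiv\left(\frac2p\right)\pmod{p}$. Finally, checking the two residue classes $p\equiv3,7\pmod{8}$ shows $\left(\frac2p\right)=(-1)^{(p+1)/4}$, which is exactly the asserted congruence.

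The only input from outside this circle of ideas is the parity of $h(4p)$; the rest is formal manipulation of part~(ii) of Theorem~\ref{Theorem A} together with reduction at a ramified prime. I expect the points needing care to be the legitimacy of the squaring step --- that the roots $\sqrt2,\ve_{4p}^{1/2}$ and the signs $(-1)^{\beta(p)},\delta$ organise themselves so that $a_p^2=A_p+\left(\frac2p\right)$ is genuinely an identity in $\Z$ --- and the role of the oddness of $h(4p)$, which is precisely what lets $u_p^{h(4p)}$ collapse to $u_p$ modulo $p$; in the hypothetical even case the argument would only return $\left(\frac2p\right)\equiv1\pmod{p}$, carrying no information about $u_p$ itself.
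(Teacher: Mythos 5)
Your proposal is correct and follows essentially the same route as the paper: the heart of both arguments is the identity $(a_p+b_p\sqrt p)^2=2\,\ve_{4p}^{-h(4p)}$ coming from Theorem \ref{Theorem A}(ii) (equivalently $a_p^2+pb_p^2=2A_p$), combined with the oddness of $h(4p)$ and reduction modulo the ramified prime to get $A_p\equiv u_p\pmod p$. The only differences are cosmetic: you close by using the norm equation $a_p^2-pb_p^2=2\left(\tfrac2p\right)$ to identify the sign, whereas the paper deduces it from the fact that $2u_p\equiv a_p^2$ must be a quadratic residue modulo $p$, and you justify the oddness of $h(4p)$ by genus theory where the paper simply cites Brown.
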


For example, when $p=419$ the fundamental unit of $\Q(\sqrt{419})$ is given by
$$\ve_{4p}=270174970+13198911\sqrt{419}.$$
It is easy to verify that $270174970\equiv -1\pmod {419}.$

Ankeny, Artin, and Chowla \cite{AACC} posed the following interesting conjecture.

{\bf The Ankeny-Artin-Chowla conjecture}.\ Let $p\equiv1\pmod4$ be a prime, and let $\ve_p=\frac{u_p+v_p\sqrt{p}}{2}>1$ be the fundamental unit of $\Q(\sqrt{p})$. Then $p\nmid v_p$.

Later a more general conjecture
was suggested by Kiselev-Slavutskii \cite{AA} and independently by L. J. Mordell \cite{Modell2}.

{\bf The Extended Ankeny-Artin-Chowla conjecture}.\ If the squarefree part of the fundamental
discriminant $D$ of a real quadratic field is an odd prime $p$, then for the fundamental unit
$\ve_D=\alpha+\beta\sqrt{p}>1$ of this field we have $p\nmid \beta$.

There are many equivalent forms of the above conjectures. For example, it is well known that when $p\equiv1\pmod4$ is a prime,
the Ankeny-Artin-Chowla conjecture holds if and only if
$$B_{\frac{p-1}{2}}\not\equiv0\pmod p,$$
where $B_{\frac{p-1}{2}}$ is the $\frac{p-1}{2}$-th Bernoulli number.

With the help of the above results we can write
$(i-(-1)^{\frac{p+1}{4}})S_p(i)=a_p+b_p\sqrt{p}$ for some $a_p,b_p\in\Z$ if $p>3$ with $p\equiv3\pmod4$ is a prime. Now we can obtain the following result.
\begin{corollary}\label{corollary of EAACC}
Let $p>3$ be a prime with $p\equiv3\mod4$.
The extended Ankeny-Artin-Chowla conjecture holds for the real quadratic field $\Q(\sqrt{p})$ if and only if $p\nmid b_p$.
\end{corollary}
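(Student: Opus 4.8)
The plan is to play off the two descriptions of $(i-(-1)^{(p+1)/4})S_p(i)$ supplied by Theorem~\ref{Theorem A}(ii). Set $\delta:=(-1)^{(p+1)/4}\in\{\pm1\}$. On one hand $(i-\delta)S_p(i)=a_p+b_p\sqrt p$; on the other hand, substituting the closed form $S_p(i)=(-1)^{\beta(p)}\ve_{4p}^{-h(4p)/2}(1+i\delta)/\sqrt2$ and using $(i-\delta)(1+i\delta)=-2\delta$ (so that its square is $4$) gives, after squaring, $(a_p+b_p\sqrt p)^2=2\,\ve_{4p}^{-h(4p)}$. Since $p\equiv3\pmod4$ the equation $x^2-py^2=-1$ is unsolvable, so $N(\ve_{4p})=1$ and $\ve_{4p}^{-h(4p)}=\overline{\ve_{4p}^{h(4p)}}$. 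Writing $\ve_{4p}^{h(4p)}=A_p+B_p\sqrt p$ with $A_p,B_p\in\Z$, comparison of the coefficients of $\sqrt p$ in $(a_p+b_p\sqrt p)^2=2(A_p-B_p\sqrt p)$ yields the clean relation $B_p=-a_pb_p$. Moreover the equation in Theorem~\ref{Theorem A}(ii) gives $a_p^2\equiv\pm2\pmod p$, hence $p\nmid a_p$ since $p>3$; therefore $p\mid b_p\iff p\mid B_p$.

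Next I would compute $B_p$ modulo $p$. In the ring $\Z[\sqrt p]/p\Z[\sqrt p]$ one has $(\sqrt p)^2\equiv0$, so the binomial theorem collapses to $\ve_{4p}^{h(4p)}=(u_p+v_p\sqrt p)^{h(4p)}\equiv u_p^{h(4p)}+h(4p)\,u_p^{h(4p)-1}v_p\sqrt p\pmod p$, i.e. $B_p\equiv h(4p)\,u_p^{h(4p)-1}v_p\pmod p$. By Corollary~\ref{Corollary A}, $u_p\equiv\delta\equiv\pm1\pmod p$, so $u_p^{h(4p)-1}$ is a unit modulo $p$, whence $p\mid B_p\iff p\mid h(4p)v_p$. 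It remains to know $p\nmid h(4p)$; this is classical, since the analytic class number formula gives $h(4p)=\sqrt p\,L(1,\chi_{4p})/\log\ve_{4p}$, and $\ve_{4p}=u_p+v_p\sqrt p$ with $u_p,v_p\ge1$ forces $\ve_{4p}>\sqrt p$, which together with $L(1,\chi_{4p})\ll\log p$ gives $h(4p)<p$ for every prime $p>3$, the finitely many small cases being checked by hand. Chaining the equivalences gives $p\mid b_p\iff p\mid v_p$.

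To finish, one identifies the target statement with the conjecture. When $p\equiv3\pmod4$ the fundamental discriminant of $\Q(\sqrt p)$ is $D=4p$, whose squarefree part is the odd prime $p$, and the fundamental unit is $\ve_D=\ve_{4p}=u_p+v_p\sqrt p$; hence the extended Ankeny--Artin--Chowla conjecture for $\Q(\sqrt p)$ is precisely the assertion $p\nmid v_p$, which by the previous step is equivalent to $p\nmid b_p$. The main obstacle I anticipate is the sign-and-normalization bookkeeping in the first step---one must verify that squaring really removes every root of unity and leaves exactly $2\ve_{4p}^{-h(4p)}$---while the one genuinely external input, $p\nmid h(4p)$, though standard, should be cited explicitly; the rest is a brief computation.
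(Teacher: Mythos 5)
Your proposal is correct and follows essentially the same route as the paper: squaring the identity of Theorem~\ref{Theorem A}(ii) to get $(a_p+b_p\sqrt p)^2=2\,\varepsilon_{4p}^{-h(4p)}=2(u_p-v_p\sqrt p)^{h(4p)}$, comparing the coefficients of $\sqrt p$ modulo $p$ via the binomial expansion, and concluding with $u_p\equiv\pm1\pmod p$, $p\nmid a_p$, and the classical facts that $h(4p)$ is odd and $h(4p)<p$. The only (cosmetic) difference is that you pass through $B_p=-a_pb_p$ and the conjugate $\overline{\varepsilon_{4p}^{h(4p)}}$, whereas the paper expands $2(u_p-v_p\sqrt p)^{h(4p)}$ directly.
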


The proofs of the above results will be given in Section 2.
\maketitle
\section{Proofs of the main results}
\setcounter{lemma}{0}
\setcounter{theorem}{0}
\setcounter{corollary}{0}
\setcounter{remark}{0}
\setcounter{equation}{0}
\setcounter{conjecture}{0}
In this section, for each positive integer $m$ we use the symbol $\zeta_m$ to denote the number $e^{2\pi i/m}$. We begin with the following well known result concerning the class number of imaginary quadratic field.
\begin{lemma}\label{Lemma of class number of imaginary quadratic fields}
Let $n>3$ be a squarefree integer with $n\equiv3\pmod4$, and let $h(-n)$ be the class number of $\Q(\sqrt{-n})$. Then we have
$$h(-n)=\sum_{0<x<n/2}\(\frac{x}{n}\)-\frac{2}{n}\sum_{0<x<n/2}\(\frac{x}{n}\)x.$$
And hence we have
$$\sum_{0<x<n,(\frac{x}{n})=+1}x\equiv\sum_{0<x<n/2}\(\frac{x}{n}\)x\equiv0\pmod n.$$
\end{lemma}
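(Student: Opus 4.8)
The statement to prove is the class number formula
$$h(-n)=\sum_{0<x<n/2}\(\frac{x}{n}\)-\frac{2}{n}\sum_{0<x<n/2}\(\frac{x}{n}\)x$$
for squarefree $n>3$ with $n\equiv3\pmod4$, together with the congruence consequence. Since $n\equiv3\pmod4$, the discriminant of $\Q(\sqrt{-n})$ is $-n$ itself, and $(\tfrac{\cdot}{n})$ is the associated odd real primitive Dirichlet character $\chi$ modulo $n$. My plan is to invoke the classical Dirichlet class number formula for imaginary quadratic fields: for a negative fundamental discriminant $-n<-3$ one has $h(-n)=\frac{w}{2n}\sum_{x=1}^{n-1}\chi(x)\,(n-x)$ with $w=2$, i.e. $h(-n)=-\frac1n\sum_{x=1}^{n-1}\chi(x)\,x$. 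Then I would fold the sum over $[1,n-1]$ down to $[1,n/2)$ using the functional equation $\chi(n-x)=\chi(-1)\chi(x)=-\chi(x)$, which holds precisely because $n\equiv3\pmod4$ makes $\chi$ odd. Pairing $x$ with $n-x$ gives $\sum_{x=1}^{n-1}\chi(x)x=\sum_{0<x<n/2}\chi(x)x+\sum_{0<x<n/2}\chi(n-x)(n-x)=\sum_{0<x<n/2}\chi(x)\big(x-(n-x)\big)=\sum_{0<x<n/2}\chi(x)(2x-n)$, so $-\frac1n\sum_{x=1}^{n-1}\chi(x)x=\sum_{0<x<n/2}\chi(x)-\frac2n\sum_{0<x<n/2}\chi(x)x$, which is exactly the claimed formula.

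**Deriving the congruence.** Once the formula is established, the divisibility statement is immediate: rearranging gives $\frac2n\sum_{0<x<n/2}\chi(x)x=\sum_{0<x<n/2}\chi(x)-h(-n)\in\Z$, hence $n\mid 2\sum_{0<x<n/2}\chi(x)x$, and since $n$ is odd this yields $\sum_{0<x<n/2}\chi(x)x\equiv0\pmod n$. For the other congruence $\sum_{0<x<n,\,(\frac{x}{n})=1}x\equiv\sum_{0<x<n/2}\chi(x)x\pmod n$, I would split the full-interval sum according to the value of $\chi$ and again use the involution $x\mapsto n-x$: writing $\Sigma_{+}=\sum_{0<x<n,\chi(x)=1}x$ and $\Sigma_{-}=\sum_{0<x<n,\chi(x)=-1}x$, the map $x\mapsto n-x$ sends the residue set of $\chi=1$ bijectively to that of $\chi=-1$ (because $\chi(-1)=-1$), so $\Sigma_{-}=\sum_{\chi(x)=1}(n-x)=n\cdot\#\{0<x<n:\chi(x)=1\}-\Sigma_{+}\equiv-\Sigma_{+}\pmod n$. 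Therefore $\Sigma_{+}\equiv\tfrac12(\Sigma_{+}-\Sigma_{-})\pmod n$ (dividing by $2$ is legitimate mod the odd number $n$), and $\tfrac12(\Sigma_{+}-\Sigma_{-})=\tfrac12\sum_{0<x<n}\chi(x)x$, which by the pairing above equals $\tfrac12\sum_{0<x<n/2}\chi(x)(2x-n)\equiv\sum_{0<x<n/2}\chi(x)x\pmod n$.

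**Anticipated obstacle.** This lemma is genuinely classical, so there is little mathematical difficulty; the only points requiring care are bookkeeping ones. First, one must be sure the sign/normalization in the Dirichlet class number formula is the one that makes $h(-n)$ come out positive — it is easy to drop a sign, and the cleanest safeguard is to state the formula in the form $h(-n)=-\frac1n\sum_{x=1}^{n-1}(\tfrac{x}{n})x$ and sanity-check it on a small case such as $n=7$ (where $h(-7)=1$). Second, the reduction of the odd character $\chi=(\tfrac{\cdot}{n})$ having conductor exactly $n$ uses squarefreeness of $n$ together with $n\equiv3\pmod4$; I would note this briefly but not belabour it. Finally, all the "divide by $2$" steps rely on $n$ being odd, which is automatic here, and the two displayed congruences then follow in a couple of lines. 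I expect the entire proof to occupy well under half a page, with the main body being the pairing computation $\sum_{x=1}^{n-1}\chi(x)x=\sum_{0<x<n/2}\chi(x)(2x-n)$.
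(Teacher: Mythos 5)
Your proposal is correct and takes essentially the same route as the paper: both start from Dirichlet's class number formula for the imaginary quadratic field (the paper cites Borevich--Shafarevich in the form $-nh(-n)=\sum_{0<x<n}\left(\tfrac{-n}{x}\right)x$, identical to your $h(-n)=-\tfrac1n\sum\chi(x)x$), fold the sum onto $(0,n/2)$ by pairing $x$ with $n-x$ via the oddness of the character, and deduce the congruences from integrality together with the oddness of $n$. The only difference is minor bookkeeping in the second congruence (your $\Sigma_{\pm}$ manipulation versus the paper's single splitting identity), which is mathematically equivalent.
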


\begin{proof}
By the class number formula of imaginary quadratic field (cf. \cite[p. 344, Theorem 1]{BS}) we have
\begin{align*}
-nh(-n)=&\sum_{0<x<n,(x,n)=1}\(\frac{-n}{x}\)x
\\=&\sum_{0<x<n/2,(x,n)=1}\(\frac{-n}{x}\)x+\sum_{0<x<n/2,(x,n)=1}\(\frac{-n}{n-x}\)(n-x)
\\=&\sum_{0<x<n/2,(x,n)=1}\(\frac{-n}{x}\)(2x-n),
\end{align*}
where $(\frac{-n}{\cdot})$ is the Kronecker symbol.
Note that when $n>0$ and $n\equiv3\pmod4$, for any positive integer $x$ with $(x,n)=1$ we have
$$\(\frac{-n}{x}\)=\(\frac{x}{n}\).$$
Hence we can easily get the desired formula of $h(-n)$. Moreover, the second result of the Lemma follows from the identity
$$\sum_{0<x<n,(\frac{x}{n})=+1}x=\sum_{0<x<n/2,(\frac{x}{n})=+1}x+\sum_{0<x<n/2,(\frac{x}{n})=-1}(n-x).$$
This completes the proof.
\end{proof}
\begin{remark}\label{remark of the class number formula of imaginary quadratic field}
Let notations be as in Lemma \ref{Lemma of class number of imaginary quadratic fields}. By \cite[p. 344, Theorem 3]{BS} we have
\begin{equation}\label{equation of another type of class number of imaginary field}
h(-n)=\frac{1}{2-(-1)^{(n+1)/4}}\sum_{0<c<n/2}\(\frac{c}{n}\)\equiv \frac{\phi(n)}{2}\pmod2.
\end{equation}
Hence if $n=p>3$ is an odd prime with $n\equiv3\pmod4$, then $h(-n)$ is odd.
If the squarefree integer $n>3$ with $n\equiv3\pmod4$ is not a prime,  then $\phi(n)\equiv0\pmod8$ and hence it is easy to see that $h(-n)$ is even.
\end{remark}

In 2004 R. Chapman \cite[Lemma 4]{R} showed that if $p>3$ is a prime with $p\equiv3\pmod4$, then
$$\prod_{0<c<p/2}(1-\zeta_p^{c^2})=(-1)^{\frac{h(-p)+1}{2}}\sqrt{-p}.$$
Later Z.-W Sun \cite{Sun1} obtained many identities involving $p$-th roots of unity with $p$ prime. Motivated by their works, we get the following result.
\begin{lemma}\label{Lemma of Sn(1)}
Let $n>3$ be a squarefree integer with $n\equiv 3\pmod4$, and let $h(-n)$ denote the class number of $\Q(\sqrt{-n})$.
Then for each integer $a$ prime to $n$ we have
$$\prod_{0<c<n, (\frac{c}{n})=+1}(1-\zeta_n^{ac})=
\begin{cases}(-1)^{\frac{h(-n)+1}{2}}(\frac{a}{n})\sqrt{-n}&\mbox{if}\ n\ \text{is prime},
\\(-1)^{\frac{h(-n)}{2}}&\mbox{otherwise}.\end{cases}$$
\end{lemma}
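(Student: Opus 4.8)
The plan is to reduce the product $P_a(n):=\prod_{0<c<n,\,(\frac c n)=+1}(1-\zeta_n^{ac})$ to a Gauss-type quantity whose square and sign we can both control. First I would observe that multiplication by $a$ permutes the quadratic residues modulo $n$ (this uses only $(a,n)=1$ and multiplicativity of the Jacobi symbol), so $P_a(n)=(\tfrac a n)^{\#\{c:(\frac c n)=1\}}$-type manipulations are available; more precisely I would show $P_a(n)/P_1(n)$ equals a power of $(\frac a n)$ determined by how reduction modulo $n$ interacts with the residue set, and separately compute $P_1(n)$. The natural first move for $P_1(n)$ is to pair it with $Q(n):=\prod_{0<b<n,\,(\frac b n)=-1}(1-\zeta_n^{b})$: since every $c$ with $(c,n)=1$ lies in exactly one of the two sets, we get $P_1(n)\cdot Q(n)=\prod_{(c,n)=1}(1-\zeta_n^{c})=\Phi_n(1)$, which is $1$ when $n$ is not a prime power and is $p$ when $n=p$ is prime. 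This pins down $|P_1(n)|^2=|P_1(n)Q(n)|$ once I know $|P_1(n)|=|Q(n)|$.

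Next I would establish $Q(n)=\overline{P_1(n)}$ (complex conjugation), which follows because $n\equiv 3\pmod 4$ forces $(-1/n)=-1$, so $c\mapsto -c\equiv n-c$ sends residues to non-residues bijectively, and $1-\zeta_n^{\,n-c}=1-\zeta_n^{-c}=\overline{1-\zeta_n^{c}}$. Hence $P_1(n)\overline{P_1(n)}=\Phi_n(1)$, giving $|P_1(n)|^2=1$ in the composite case and $|P_1(n)|^2=p$ in the prime case. In the composite case $P_1(n)$ is a unit; I would then argue it is real (again via the conjugation symmetry, now noting $P_1(n)=\pm\overline{P_1(n)}$ and ruling out the minus sign, or directly checking $P_1(n)\in\mathbb{Q}(\sqrt{-n})\cap\mathbb{R}=\mathbb{Q}$), so $P_1(n)=\pm1$. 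In the prime case the standard Gauss-sum computation shows $P_1(p)\in\mathbb{Z}[\zeta_p]$ generates the same ideal as $\sqrt{p^*}=\sqrt{-p}$ (here $p\equiv3\pmod4$), and comparing with $\overline{P_1(p)}=-P_1(p)$ (the minus now being correct since $Q(p)=\overline{P_1(p)}$ and $P_1(p)Q(p)=p>0$ forces $P_1(p)$ purely imaginary) yields $P_1(p)=\pm\sqrt{-p}$.

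It remains to fix the signs, and this is the main obstacle. For the prime case I would invoke Chapman's identity (quoted just before the lemma) $\prod_{0<c<p/2}(1-\zeta_p^{c^2})=(-1)^{(h(-p)+1)/2}\sqrt{-p}$: since $\{c^2\bmod p:0<c<p/2\}$ runs exactly once over the quadratic residues, $P_1(p)=\prod_{0<c<p/2}(1-\zeta_p^{c^2})$, so $P_1(p)=(-1)^{(h(-p)+1)/2}\sqrt{-p}$, and then the $a$-dependence contributes the extra factor $(\frac a p)$ via the permutation argument above (one checks the residue count $\#\{0<c<p,(\frac c p)=1\}=(p-1)/2$ is odd when $p\equiv3\pmod4$, so a single factor $(\frac a p)$ survives). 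For the composite case, having reduced to $P_a(n)=\pm1$, I would determine the sign by the parity of $h(-n)$: using Lemma \ref{Lemma of class number of imaginary quadratic fields} and Remark \ref{remark of the class number formula of imaginary quadratic field} (which give $h(-n)\equiv\phi(n)/2\pmod 2$, hence $h(-n)$ even in the composite case), I expect the sign to be $(-1)^{h(-n)/2}$; the cleanest route is probably to evaluate $\arg P_1(n)=\sum_{(\frac c n)=1}\arg(1-\zeta_n^{c})$ using $\arg(1-e^{i\theta})=(\theta-\pi)/2$ and the identity $\sum_{0<c<n,(\frac c n)=1}c\equiv 0\pmod n$ from Lemma \ref{Lemma of class number of imaginary quadratic fields} to show this angle is a multiple of $\pi$, then track it modulo $2\pi$ against $h(-n)$ via the class-number formula. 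I anticipate the bookkeeping in this last sign computation — correctly accounting for the floor-function truncations in $\sum\arg(1-\zeta_n^c)$ and relating them to $\sum_{0<x<n/2}(\frac x n)x$ — to be the delicate part; the $a$-independence in the composite case should then be immediate since $P_a(n)=\pm1$ and the permutation argument shows the sign does not change (the relevant residue count is even).
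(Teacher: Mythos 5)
Your proposal is correct and follows essentially the same route as the paper: in the composite case both arguments get $|S_n(1)|=1$ from $\Phi_n(1)=1$ together with complex conjugation (so the value is a unit of $\Q(\sqrt{-n})$, hence $\pm1$), and then fix the sign by an argument computation fed into the class-number identities of Lemma \ref{Lemma of class number of imaginary quadratic fields} — the paper first applies $\sigma_{-2}$ to factor $1-\zeta_n^{-2c}=\zeta_n^{-c}\cdot 2i\sin\frac{2\pi c}{n}$, while you use $\arg(1-e^{i\theta})=(\theta-\pi)/2$ directly, which works just as well (indeed the $\phi(n)/4$ terms cancel and the sum $\frac1n\sum_{(\frac{c}{n})=1}c=\frac{\phi(n)}{4}-\frac{h(-n)}{2}$ gives exactly $(-1)^{h(-n)/2}$). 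For the prime case the paper simply cites Sun's Theorem 1.3, whereas you rederive it from Chapman's identity plus the residue-permutation/conjugation argument for the factor $(\frac{a}{p})$ — an essentially equivalent substitute.
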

\begin{proof}
When $n>3$ is a prime, the above result is known (cf. \cite[Theorem 1.3]{Sun1}). Suppose now that
$n>3$ is not a prime. Let $\Phi_n(x)$ denote the $n$-th cyclotomic polynomial. It is known that
(cf. \cite[p. 142, Proposition 3.5.4]{Cohn})
\begin{equation}\label{equation of cyclotomic polynomial}
\Phi_n(1)=\prod_{0<c<n,(c,n)=1}(1-\zeta_n^c)=1.
\end{equation}
Applying the automorphism $\zeta_n\mapsto\zeta_n^{-1}$ we see that $S_n(1)$ and the product
$$\prod_{0<c<n,(\frac{c}{n})=-1}(1-\zeta_n^c)$$
are complex conjugated, and in view of (\ref{equation of cyclotomic polynomial}) and the equality
$$S_n(1)\prod_{0<c<n,(\frac{c}{n})=-1}(1-\zeta_n^c)=\left|S_n(1)\right|^2=1$$
we obtain $\left|S_n(1)\right|^2=1.$ Hence it suffices to determine the argument of this number.
We use the symbol $\Arg(z)$ to denote the argument of a complex number $z$. Let $\sigma_{-2}$ be an element in the Galois group $\Gal(\Q(\zeta_n)/\Q)$ with
$\sigma_{-2}(\zeta_n)=\zeta_n^{-2}$. We consider the number
$$\sigma_{-2}(S_n(1))=\prod_{0<c<n, (\frac{c}{n})=+1}(1-\zeta_n^{-2c}).$$
We have
\begin{align*}
\prod_{0<c<n, (\frac{c}{n})=+1}(1-\zeta_n^{-2c})
=&\prod_{0<c<n, (\frac{c}{n})=+1}\zeta_n^{-c}(\zeta_n^{c}-\zeta_n^{-c})
\\=&\prod_{0<c<n, (\frac{c}{n})=+1}\zeta_n^{-c}\cdot2i\cdot\sin\frac{2\pi c}{n}.
\end{align*}
Hence we have
\begin{equation*}
\Arg(\sigma_{-2}(S_n(1)))\equiv \sum_{0<c<n, (\frac{c}{n})=+1}\(\frac{-2\pi c}{n}
+\frac{\pi}{2}\sgn(\sin\frac{2\pi c}{n})\)\pmod{2\pi\Z},
\end{equation*}
where $\sgn(x)$ is the sign of a real number $x$. Via computation, we have
\begin{equation*}
\Arg(\sigma_{-2}(S_n(1)))\equiv \frac{-2\pi}{n}\sum_{0<c<n/2}\(\frac{c}{n}\)c
+\frac{\pi}{2}\sum_{0<c<n/2}\(\frac{c}{n}\)\pmod{2\pi\Z}.
\end{equation*}

With the help of Lemma \ref{Lemma of class number of imaginary quadratic fields} and (\ref{equation of another type of class number of imaginary field}), we finally get that
$$\Arg(\sigma_{-2}(S_n(1)))\equiv\frac{h(-n)}{2}\cdot\pi
\pmod{2\pi\Z}.$$
(Note that by Remark \ref{remark of the class number formula of imaginary quadratic field} we have
$2\mid h(-n)$ in this case.) Hence
$$\sigma_{-2}(S_n(1))=(-1)^{\frac{h(-n)}{2}}.$$
And
$$S_n(1)=\sigma_{-2}^2(S_n(1))=(-1)^{\frac{h(-n)}{2}}.$$
From this we can easily get the desired result.
\end{proof}

We next consider the product $S_n(i)S_n(-i)$.
\begin{lemma}\label{Lemma of Sn(i) times Sn(-i)}
Let $n>3$ be a squarefree integer with $n\equiv3\pmod4$. Then we have
$$S_n(i)S_n(-i)=\begin{cases}(-1)^{\frac{n+1}{4}}&\mbox{if}\ n\ \text{is prime},
\\1&\mbox{otherwise}.\end{cases}$$
\end{lemma}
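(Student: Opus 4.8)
The plan is to compute the product $S_n(i)S_n(-i)$ directly from the definition. Writing $S_n(i)=\prod_{(\frac{c}{n})=1}(i-\zeta_n^c)$ and $S_n(-i)=\prod_{(\frac{c}{n})=1}(-i-\zeta_n^c)$, multiplying the two displays termwise gives
$$S_n(i)S_n(-i)=\prod_{0<c<n,\,(\frac{c}{n})=1}(i-\zeta_n^c)(-i-\zeta_n^c)=\prod_{0<c<n,\,(\frac{c}{n})=1}(\zeta_n^{2c}+1),$$
since $(i-\zeta_n^c)(-i-\zeta_n^c)=-( i-\zeta_n^c)(i+\zeta_n^c)=-(i^2-\zeta_n^{2c})=1+\zeta_n^{2c}$. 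Then I would factor $1+\zeta_n^{2c}=\frac{1-\zeta_n^{4c}}{1-\zeta_n^{2c}}$ (valid because $\zeta_n^{2c}\ne 1$ as $\gcd(c,n)=1$ and $n$ is odd). Hence
$$S_n(i)S_n(-i)=\frac{\prod_{(\frac{c}{n})=1}(1-\zeta_n^{4c})}{\prod_{(\frac{c}{n})=1}(1-\zeta_n^{2c})}=\frac{\prod_{(\frac{c}{n})=1}(1-\zeta_n^{4c})}{\prod_{(\frac{c}{n})=1}(1-\zeta_n^{2c})}.$$
Both numerator and denominator are instances of the product evaluated in Lemma \ref{Lemma of Sn(1)} with the multipliers $a=4$ and $a=2$ respectively (each is coprime to $n$ since $n$ is odd).

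Next I would apply Lemma \ref{Lemma of Sn(1)} to each factor. In the non-prime case the lemma gives $\prod_{(\frac{c}{n})=1}(1-\zeta_n^{ac})=(-1)^{h(-n)/2}$ independently of $a$, so the numerator and denominator coincide and the quotient is $1$, as claimed. In the prime case $n=p$, the lemma gives $\prod_{(\frac{c}{p})=1}(1-\zeta_p^{ac})=(-1)^{(h(-p)+1)/2}(\frac{a}{p})\sqrt{-p}$, so
$$S_p(i)S_p(-i)=\frac{(-1)^{(h(-p)+1)/2}(\frac{4}{p})\sqrt{-p}}{(-1)^{(h(-p)+1)/2}(\frac{2}{p})\sqrt{-p}}=\frac{(\frac{4}{p})}{(\frac{2}{p})}=\frac{1}{(\frac{2}{p})}=\(\frac{2}{p}\),$$
using $(\frac{4}{p})=1$ and $(\frac{2}{p})^2=1$. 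Finally, since $p\equiv 3\pmod 4$, the second supplement to quadratic reciprocity gives $(\frac{2}{p})=(-1)^{(p^2-1)/8}=(-1)^{(p+1)/4}$ (the two exponents have the same parity when $p\equiv 3\pmod4$), which is exactly the stated value $(-1)^{(p+1)/4}$.

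There is essentially no serious obstacle here: the whole argument is a short manipulation reducing $S_n(i)S_n(-i)$ to a ratio of two products of the shape already evaluated in Lemma \ref{Lemma of Sn(1)}. The only points requiring a little care are checking that $\zeta_n^{2c}\ne 1$ so the factorization $1+\zeta_n^{2c}=(1-\zeta_n^{4c})/(1-\zeta_n^{2c})$ is legitimate (immediate from $n$ odd and $\gcd(c,n)=1$), that $2$ and $4$ are admissible multipliers in Lemma \ref{Lemma of Sn(1)} (again from $n$ odd), and the final identification $(\frac{2}{p})=(-1)^{(p+1)/4}$ for $p\equiv3\pmod4$, which is the standard evaluation of the Legendre symbol $(\frac{2}{p})$. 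I would present these as one-line verifications rather than separate lemmas.
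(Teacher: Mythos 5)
Your proposal is correct and follows exactly the paper's route: rewrite $S_n(i)S_n(-i)=\prod(1+\zeta_n^{2c})=\prod\frac{1-\zeta_n^{4c}}{1-\zeta_n^{2c}}$ and apply Lemma \ref{Lemma of Sn(1)} with $a=4$ and $a=2$. The paper leaves the final step as ``follows from Lemma \ref{Lemma of Sn(1)}''; your explicit evaluation, including the identification $(\frac{2}{p})=(-1)^{(p+1)/4}$ for $p\equiv3\pmod4$, is the intended computation.
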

\begin{proof}
\begin{equation*}
S_n(i)S_n(-i)=\prod_{0<c<n,(\frac{c}{n})=+1}(1+\zeta_n^{2c})
=\prod_{0<c<n,(\frac{c}{n})=+1}\frac{1-\zeta_n^{4c}}{1-\zeta_n^{2c}}.
\end{equation*}
Then the desired result follows from Lemma \ref{Lemma of Sn(1)}.
\end{proof}

Dirichlet first realized that $(i-(-1)^{\frac{p+1}{4}})S_p(i)\in\Z[\sqrt{p}]$ if $p>3$ is a prime with $p\equiv3\pmod4$. We extend Dirichlet's result and obtain the following result.

\begin{lemma}\label{Lemma of Galois}
Let $n>3$ be a squarefee integer with $n\equiv3\pmod4$. Then we have
$$\gamma(4n)S_n(i)\in\Z(\sqrt{n}),$$
where
$$\gamma(4n)=\begin{cases}i-(-1)^{\frac{n+1}{4}}&\mbox{if}\ n\ \text{is prime},
\\1&\mbox{otherwise}.\end{cases}$$
\end{lemma}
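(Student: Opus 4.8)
The plan is to show that $\ga(4n)S_n(i)$ is an algebraic integer lying in the real quadratic field $\Q(\sqrt n)$; since $n\equiv3\pmod4$ is squarefree, the ring of integers of $\Q(\sqrt n)$ equals $\Z[\sqrt n]$, so this yields the assertion.

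First I would work inside $K=\Q(\zeta_{4n})$, identifying $G=\Gal(K/\Q)$ with $(\Z/4n\Z)^\times$ so that $\sigma_a\colon\zeta_{4n}\mapsto\zeta_{4n}^a$ satisfies $\sigma_a(i)=i^a$, $\sigma_a(\zeta_n)=\zeta_n^a$ and (via the quadratic Gauss sum) $\sigma_a(\sqrt{-n})=(\frac an)\sqrt{-n}$. As $n\equiv3\pmod4$ is squarefree, $\sqrt{-n}\in\Q(\zeta_n)$ and $i\in\Q(\zeta_4)$, so $\Q(i,\sqrt n)=\Q(i,\sqrt{-n})$ is a degree-$4$ subfield of $K$, namely the fixed field of $H:=\{\sigma_a:\ a\equiv1\pmod4,\ (\frac an)=1\}$. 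For $\sigma_a\in H$ the substitution $c\mapsto ac\bmod n$ permutes $\{0<c<n:(\frac cn)=1\}$ while fixing $i$, so $\sigma_a(S_n(i))=S_n(i)$; hence $S_n(i)\in\Q(i,\sqrt n)$, and so does $\ga(4n)S_n(i)$ because $\ga(4n)\in\Q(i)$. Since $\Q(i,\sqrt n)\cap\R=\Q(\sqrt n)$ and $\ga(4n)S_n(i)$ is plainly an algebraic integer, the whole problem reduces to showing $\ga(4n)S_n(i)\in\R$.

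To do this I would introduce $T_n(x)=\prod_{0<b<n,\,(\frac bn)=-1}(x-\zeta_n^b)$, so $S_n(x)T_n(x)=\Phi_n(x)$. Conjugating $S_n(i)$ and re-indexing $c\mapsto n-c$ — using $(\frac{-1}{n})=-1$ since $n\equiv3\pmod4$ — gives the reflection identity $\ov{S_n(i)}=T_n(-i)=\Phi_n(-i)/S_n(-i)$. A short computation with $\Phi_n(x)=\prod_{d\mid n}(x^d-1)^{\mu(n/d)}$ (and $\sum_{d\mid n}\mu(d)=0$) shows that $\Phi_n(i)=1$, hence $\Phi_n(-i)=\ov{\Phi_n(i)}=1$, when $n$ is not prime, and $\Phi_p(i)=(i^p-1)/(i-1)=i$, hence $\Phi_p(-i)=-i$, when $n=p$ is prime. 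Then I would split cases. If $n$ is not prime, $\ga(4n)=1$, and combining the reflection identity with Lemma~\ref{Lemma of Sn(i) times Sn(-i)} ($S_n(i)S_n(-i)=1$) gives $\ov{S_n(i)}=1/S_n(-i)=S_n(i)$, so $S_n(i)\in\R$. If $n=p$ is prime, set $\da:=(-1)^{(p+1)/4}\in\{1,-1\}$, so $\ga(4p)=i-\da$; now Lemma~\ref{Lemma of Sn(i) times Sn(-i)} gives $S_p(i)S_p(-i)=\da$, whence $\ov{S_p(i)}=\Phi_p(-i)/S_p(-i)=(-i)S_p(i)/\da=-i\da\,S_p(i)$, and therefore $\ov{(i-\da)S_p(i)}=(-i-\da)(-i\da)S_p(i)=(i-\da)S_p(i)$, using $(-i-\da)(-i\da)=-\da+i\da^2=i-\da$. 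In both cases $\ga(4n)S_n(i)\in\R$, which finishes the argument.

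The hard part is this last step: one needs the \emph{argument} of $S_n(i)$, not just its modulus, and the naive guess $\ov{S_n(i)}=S_n(-i)$ is wrong — the correct reflection identity is $\ov{S_n(i)}=T_n(-i)$, because conjugation also sends $\zeta_n^c\mapsto\zeta_n^{-c}$ and $c\mapsto-c$ flips the quadratic-residue class when $n\equiv3\pmod4$. What makes the argument computable is the pair of inputs: the explicit values of $\Phi_n(\pm i)$ and Lemma~\ref{Lemma of Sn(i) times Sn(-i)}. In the prime case one also needs the small identity $(-i-\da)(-i\da)=i-\da$ for $\da=\pm1$, which is exactly why the correction factor $\ga(4p)=i-(-1)^{(p+1)/4}$ has to be inserted.
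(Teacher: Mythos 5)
Your proposal is correct and is essentially the paper's argument in a slightly reorganized form: both proofs amount to checking invariance of $\gamma(4n)S_n(i)$ under $\Gal(\Q(\zeta_{4n})/\Q(\sqrt{n}))$, your two steps (invariance under $H=\{\sigma_a:a\equiv1\pmod 4,\ (\frac{a}{n})=1\}$ plus reality under complex conjugation $\tau_{-1}$) covering exactly the two cosets the paper treats, and both hinge on the same inputs, namely the values $\Phi_n(\pm i)$ and Lemma \ref{Lemma of Sn(i) times Sn(-i)}, together with the same algebra $(-i-\delta)(-i\delta)=i-\delta$ in the prime case. The only real deviations are minor: you get $\Phi_n(i)=1$ for composite $n$ from the M\"obius product formula (note the hint $\sum_{d\mid n}\mu(d)=0$ only controls the modulus, so the argument of $(-i)^{A}$ still has to be pinned down, which does work out) instead of the paper's argument computation via Lemma \ref{Lemma of class number of imaginary quadratic fields}, and you spell out the integrality step $\Q(\sqrt{n})\cap\overline{\Z}=\Z[\sqrt{n}]$ more carefully than the paper does.
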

\begin{proof}
We divide our proof into two cases.

{\it Case}\ 1. $n=p$ is a prime.

In this case it is easy to see that the Galois group
$$\Gal\(\Q(i,\zeta_p)/\Q(\sqrt{p})\)=\{\tau_a: a\in(\Z/4p\Z)^{\times},\(\frac{p}{a}\)=+1 \},$$
where $\tau_a$ is a $\Q$-automorphism of $\Q(\zeta_{4p})$ defined by sending
$\zeta_{4p}$ to $\zeta_{4p}^a$.

For each $\tau_a\in\Gal(\Q(i,\zeta_p)/\Q(\sqrt{p}))$, clearly
$(\frac{p}{a})=+1$ implies that either $a\equiv 1\pmod 4$ and $(\frac{a}{p})=+1$ or $a\equiv3\pmod4$ and $(\frac{a}{p})=-1$. If $a\equiv 1\pmod 4$ and $(\frac{a}{p})=+1$, then $\tau_a$ acts trivially on $(i-(-1)^{\frac{p+1}{4}})S_p(i)$. Suppose now that $a\equiv3\pmod4$ and $(\frac{a}{p})=-1$. Then
$$\tau_a((i-(-1)^{\frac{p+1}{4}})S_p(i))=(-i-(-1)^{\frac{p+1}{4}})\prod_{1\le t\le \frac{p-1}{2}}(-i-\zeta_p^{-t^2}).$$
The equality
$$\prod_{1\le t\le\frac{p-1}{2}}(-i-\zeta_p^{t^2})\times\prod_{1\le t\le\frac{p-1}{2}}(-i-\zeta_p^{-t^2})
=\frac{(-i)^{p}-1}{-i-1}=-i.$$
And Lemma \ref{Lemma of Sn(i) times Sn(-i)} imply that
$$S_p(i)S_p(-i)=(-1)^{\frac{p+1}{4}}.$$
From the above we see that $\tau_a$ fixes $(i-(-1)^{\frac{p+1}{4}})S_p(i)$ and hence we have $(i-(-1)^{\frac{p+1}{4}})S_p(i)\in\Z[\sqrt{p}]$.

{\it Case.}\ 2. $n$ is not a prime.

Recall that $\Phi_n(x)$ is the $n$-th cyclotomic polynomial. We first show that in this case
\begin{equation}\label{equation of Phi n (i)}
\Phi_n(i)=1.
\end{equation}
In fact, we have
\begin{align*}
\Phi_n(i)=&\prod_{0<c<n,(\frac{c}{n})=+1}(i-\zeta_n^{c})(i-\zeta_n^{-c})
\\=&(-i)^{\frac{\phi(n)}{2}}\prod_{0<c<n,(\frac{c}{n})=+1}
\zeta_n^{-c}\prod_{0<c<n,(\frac{c}{n})=+1}\frac{1-\zeta_n^{4c}}{1-\zeta_n^{2c}}.
\end{align*}
By Lemma \ref{Lemma of Sn(1)} we have
$$\prod_{0<c<n,(\frac{c}{n})=+1}\frac{1-\zeta_n^{4c}}{1-\zeta_n^{2c}}=1.$$
And as in the proof of Lemma \ref{Lemma of Sn(1)} we have
$$\Arg(\prod\limits_{\substack{0<c<n,\\(\frac{c}{n})=+1}}\zeta_n^{-c})\equiv
\frac{2\pi}{n}\sum_{0<c<n/2}\(\frac{c}{n}\)c\equiv0\pmod{2\pi\Z}.$$
Hence $\Phi_n(i)=(-i)^{\frac{\phi(n)}{2}}=1$. Now we can show that $S_n(i)\in\Q(\sqrt{n})$ in this case.
Clearly the Galois group
$$\Gal\(\Q(i,\zeta_n)/\Q(\sqrt{n})\)=\{\tau_a: a\in(\Z/4n\Z)^{\times}\ \text{and}\ \(\frac{4n}{a}\)=+1\},$$
where $\tau_a$ is a $\Q$-automorphism of $\Q(\zeta_{4n})$
with $\tau_a(\zeta_{4n})=\zeta_{4n}^a$. For each $\tau_a\in\Gal(\Q(\zeta_{4n})/\Q(\sqrt{n}))$, it
is easy to see that
$(\frac{4n}{a})=+1$ implies that either $a\equiv 1\pmod 4$ and $(\frac{a}{n})=+1$ or $a\equiv3\pmod4$ and $(\frac{a}{n})=-1$. If $a\equiv 1\pmod 4$ and $(\frac{a}{p})=+1$, then clearly $\tau_a$ acts trivially on $S_n(i)$. Suppose now that $a\equiv3\pmod4$ and $(\frac{a}{n})=-1$. Then
\begin{equation*}
\tau_a(S_n(i))=\prod_{0<c<n,(\frac{c}{n})=+1}(-i-\zeta_n^{-c})=\frac{\Phi_n(i)}{S_n(-i)}=S_n(i).
\end{equation*}
The last equality follows from Lemma \ref{Lemma of Sn(i) times Sn(-i)} and (\ref{equation of Phi n (i)}).
This completes the proof.
\end{proof}
Now we are in the position to prove our main results.

\noindent{\bf Proof of Theorem\ \ref{Theorem A}}. By the class number formula of real quadratic field
(cf. \cite[p. 344, Theorem 2]{BS}) we have
\begin{equation}\label{equation of the class number of real quadratic field}
\ve_{4n}^{h(4n)}=\prod\limits_{\substack{0<b<2n\\ (\frac{4n}{b})=-1}}\sin\frac{\pi b}{4n}\ {\Big /}\prod\limits_{\substack{0<c<2n\\ (\frac{4n}{c})=+1}}\sin\frac{\pi c}{4n}
=\prod\limits_{\substack{0<c<2n\\ (\frac{4n}{c})=+1}}\cot\frac{\pi c}{4n}.
\end{equation}
The last equality of (\ref{equation of the class number of real quadratic field}) holds since for any $0<c<2n$,
$$\(\frac{4n}{c}\)=+1\ \text{if and only if}\ \(\frac{4n}{2n-c}\)=-1.$$
Clearly there exist $s_0,t_0\in\Z$ with $2\nmid t_0$ and $ns_0+4t_0=1$ such that
$\zeta_{4n}=i^{s_0}\zeta_n^{t_0}$. One can verify that
\begin{equation}\label{equation of phi n}
\#\{0<c<2n:\ \(\frac{4n}{c}\)=+1\}=\frac{\phi(n)}{2}.
\end{equation}
By (\ref{equation of the class number of real quadratic field}) and (\ref{equation of phi n}), we have
\begin{equation}\label{equation of zeta n and zeta 4n}
\ve_{4n}^{h(4n)}=(-i)^{\frac{\phi(n)}{2}}\prod\limits_{\substack{0<c<2n\\ (\frac{4n}{c})=+1}}\frac{1+\zeta_{4n}^c}{1-\zeta_{4n}^c}
=(-i)^{\frac{\phi(n)}{2}}\prod\limits_{\substack{0<c<2n\\ (\frac{4n}{c})=+1}}\frac{1-i^c\zeta_n^{t_0c}}{1+i^c\zeta_n^{t_0c}}.
\end{equation}
The last equality of (\ref{equation of zeta n and zeta 4n}) follows from $s_0\equiv 3\pmod 4$.

For each $0<c<2n$ the equality $(\frac{4n}{c})=+1$ implies
$$2\nmid c\ \text{and}\ \(\frac{c}{n}\)=(-1)^{\frac{c-1}{2}}.$$
We therefore get that
\begin{equation}\label{equation of c mod 4}
\ve_{4n}^{h(4n)}=(-i)^{\frac{\phi(n)}{2}}
\prod\limits_{\substack{0<c<n\\ c\equiv1\pmod4}}\frac{1-i\zeta_n^{t_0c(\frac{c}{n})}}{1+i\zeta_n^{t_0c(\frac{c}{n})}}
\prod\limits_{\substack{0<c<n\\ c\equiv3\pmod4}}\frac{1+i\zeta_n^{-t_0c(\frac{c}{n})}}{1-i\zeta_n^{-t_0c(\frac{c}{n})}}.
\end{equation}
Simplifying (\ref{equation of c mod 4}), we have
\begin{equation}\label{equation of Jacobi symbol of c}
\ve_{4n}^{h(4n)}=(-i)^{\frac{\phi(n)}{2}}(-1)^{\delta(4n)}
\prod\limits_{\substack{0<c<n,2\nmid c\\ (\frac{c}{n})=+1}}\frac{1-i\zeta_n^{t_0c}}{1+i\zeta_n^{t_0c}}
\prod\limits_{\substack{0<c<n,2\nmid c\\ (\frac{c}{n})=-1}}\frac{1+i\zeta_n^{t_0c}}{1-i\zeta_n^{t_0c}},
\end{equation}
where
\begin{equation}\label{equation of delta 4n}
\delta(4n)=\#\{0<c<n:\ \(\frac{4n}{c}\)=-1\}.
\end{equation}
Note that $(\frac{t_0}{n})=1$. And for each $0<c<n$, we have
$$2\nmid c\ \text{and}\ \(\frac{c}{n}\)=-1\ \text{if and only if}\ 2\mid n-c\ \text{and}\ \(\frac{n-c}{n}\)=+1.$$
Hence we further obtain
\begin{equation}\label{equation of Jacobi symbol of c =1}
\ve_{4n}^{h(4n)}=(-i)^{\frac{\phi(n)}{2}}(-1)^{\delta(4n)+\lambda(4n)}
\prod\limits_{\substack{0<c<n\\ (\frac{c}{n})=+1}}\frac{1-i\zeta_n^c}{1+i\zeta_n^c},
\end{equation}
where
\begin{equation}\label{lambda 4n}
\lambda(4n)=\#\{0<c<n/2:\ \(\frac{c}{n}\)=\(\frac{2}{n}\)\}.
\end{equation}
Observing that
$$\frac{1-i\zeta_n^c}{1+i\zeta_n^c}=-1\times\frac{-i-\zeta_n^c}{i-\zeta_n^c},$$
we finally obtain the following identity
\begin{equation}\label{equation of the square of Sn(i)}
\ve_{4n}^{h(4n)}=i^{\frac{\phi(n)}{2}}(-1)^{\delta(4n)+\lambda(4n)}\cdot\frac{S_n(-i)}{S_n(i)}
\end{equation}
By Lemma \ref{Lemma of Sn(i) times Sn(-i)} we have
\begin{equation}\label{equation of Sn(i)}
S_n(i)^2\ve_{4n}^{h(4n)}=i^{\frac{\phi(n)}{2}}\cdot(-1)^{\delta(4n)+\lambda(4n)}\cdot\theta(4n),
\end{equation}
where
$$\theta(4n)=\begin{cases}(-1)^{\frac{n+1}{4}}&\mbox{if}\ n\ \text{is prime},
\\1&\mbox{otherwise}.\end{cases}$$
It follows from (\ref{equation of Sn(i)}) that $S_n(i)$ is a unit in $\Z[\zeta_{4n}]$.
We now divide the remaining proof into two cases.

{\it Case}\ 1. $n$ is not a prime.

In this case, by Lemma \ref{Lemma of Galois} we have $S_n(i)\in\Z(\sqrt{n})$. Thus
$$S_n(i)^2\ve_{4n}^{h(4n)}=(-1)^{\frac{\phi(n)}{4}}\cdot(-1)^{\delta(4n)+\lambda(4n)}>0.$$
Hence we have
\begin{equation}\label{equation n is not a prime}
S_n(i)^2\ve_{4n}^{h(4n)}=1.
\end{equation}
We set $S_n(i)=a_n+b_n\sqrt{n}$ with $a_n,b_n\in\Z$. Since $S_n(i)$ is a unit, we get
$$a_n^2-nb_n^2=1.$$
Now we give the explicit value of $S_n(i)$. By (\ref{equation n is not a prime}) we have
$$S_n(i)=\pm\ve_{4n}^{-h(4n)/2}.$$
Hence it is enough to determine the sign of $S_n(i)$. Let
$\tau_{n+2}\in\Gal(\Q(\zeta_{4n})/\Q)$ with $\tau_{n+2}(\zeta_{4n})=\zeta_{4n}^{n+2}$. It is easy to see that
$$\tau_{n+2}(S_n(i))=\begin{cases}S_n(i)&\mbox{if}\ (\frac{2}{n})=+1,
\\\\1/S_n(i)&\mbox{if}\ (\frac{2}{n})=-1.\end{cases}$$
Hence
$$\sgn(\tau_{n+2}(S_n(i)))=\sgn(S_n(i)).$$
We therefore consider the argument of $\tau_{n+2}(S_n(i))$. Then we have
\begin{align*}
\tau_{n+2}(S_n(i))=&\prod_{0<c<n,(\frac{c}{n})=+1}(i-\zeta_n^{2c})
=i^{\frac{\phi(n)}{2}}\prod_{0<c<n,(\frac{c}{n})=+1}(1+i\zeta_n^{2c})
\\=&i^{\frac{\phi(n)}{2}}
\prod_{0<c<n,(\frac{c}{n})=+1}\zeta_{8n}^{n+8c}(\zeta_{8n}^{n+8c}+\zeta_{8n}^{-(n+8c)})
\\=&i^{\frac{\phi(n)}{2}}
\prod_{0<c<n,(\frac{c}{n})=+1}\zeta_{8n}^{n+8c}\sqrt{2}\(\cos\frac{2c\pi}{n}-\sin\frac{2c\pi}{n}\).
\end{align*}
Using the similar method as in the proof of Lemma \ref{Lemma of Sn(1)}, it is easy to verify that  $\Arg(\tau_{n+2}(S_n(i))) \mod{2\pi\Z}$ is equal to
\begin{equation*}
\frac{\phi(n)\pi}{8}+
\frac{\pi}{2}\sum\limits_{\substack{0<c<\frac{n}{2}\\ (\frac{c}{n})=+1}}\sgn\(1-\cot\frac{2c\pi}{n}\)
-\frac{\pi}{2}\sum\limits_{\substack{\frac{n}{2}<c<n\\ (\frac{c}{n})=+1}}\sgn\(1-\cot\frac{2c\pi}{n}\)\pmod{2\pi\Z}.
\end{equation*}
Using some elementary properties of $\cot(x)$ and noting that $8\mid\phi(n)$, it is easy to verify that
\begin{equation}\label{equation of the argument when n is not a prime}
\Arg(\tau_{n+2}(S_n(i)))\equiv \pi\(\frac{\phi(n)}{8}+\alpha(n)\)\pmod{2\pi\Z}.
\end{equation}
Hence we have
\begin{equation}\label{explicit value of Sn(i) when n is not a prime}
S_n(i)=(-1)^{\frac{\phi(n)}{8}+\alpha(n)}\cdot\ve_{4n}^{-h(4n)/2}.
\end{equation}

{\it Case}\ 2. $n=p$ is a prime.

In this case, we first show that
\begin{equation}\label{equation of the square of Sp(i)}
S_p(i)^2\ve_{4p}^{h(4p)}=i^{\frac{p+3}{2}}.
\end{equation}
To see this, by Lemma \ref{Lemma of Galois} we may set
\begin{equation}\label{equation N}
(i-(-1)^{\frac{p+1}{4}})S_p(i)=a_p+b_p\sqrt{p}
\end{equation}
with $a_p,b_p\in\Z.$ And we also set
\begin{equation}\label{equation O}
\ve_{4p}^{h(4p)}=(u_p+v_p\sqrt{p})^{h(4p)}=A_p+B_p\sqrt{p}
\end{equation}
with $A_p,B_p\in\Z$. Since $\ve_{4p}>1$, we have $u_p$, $v_p>0$. This implies $A_p>0$.
Via computation we also have
\begin{equation}\label{equation P}
(a_p+b_p\sqrt{p})^2=2(A_p-B_p\sqrt{p})\cdot (-1)^{\frac{p+5}{4}+\delta(4p)+\lambda(4p)}.
\end{equation}
Since $A_p>0$, we have
\begin{equation}\label{equation which will be used in the proof of corollary}
a_p^2+pb_p^2=2A_p,
\end{equation}
and
$$\delta(4p)+\lambda(4p)\equiv \frac{p+5}{4}\pmod 2.$$
From this and (\ref{equation of Sn(i)}) we see that (\ref{equation of the square of Sp(i)})
holds. Moreover, by (\ref{equation P}) we have
$$\Norm_{\Q(\sqrt{p})/\Q}(a_p+b_p\sqrt{p})^2=4.$$
From this we obtain that
$$a_p^2-pb_p^2=\epsilon\cdot2$$
for some $\epsilon\in\{\pm1\}.$ Since $\epsilon\cdot2$ is a quadratic residue modulo $p$, we must have
$$a_p^2-pb_p^2=\(\frac{2}{p}\)2.$$
Now we compute the explicit value of $S_p(i)$. Let $\tau_{p+2}\in\Gal(\Q(\zeta_{4p})/\Q)$
with $\tau_{p+2}(\zeta_{4p})=\zeta_{4p}^{p+2}$. Then it is easy to see that
$$\tau_{p+2}(S_p(i))=\begin{cases}S_p(i)&\mbox{if}\ (\frac{2}{p})=+1,
\\\frac{i}{S_p(i)}&\mbox{if}\ (\frac{2}{p})=-1.\end{cases}$$
From this we first consider the argument of $\tau_{p+2}(S_p(i))$. As
\begin{align*}
\tau_{p+2}(S_p(i))=&\prod_{0<c<p,(\frac{c}{p})=+1}(i-\zeta_p^{2c}),
\end{align*}
using the essentially same method in the Case 1, we immediately obtain that $\Arg(\tau_{p+2}(S_p(i)))\mod {2\pi\Z}$ is equal to
\begin{equation*}
\frac{p+7}{8}\pi+\frac{\pi}{2}
\sum\limits_{\substack{0<c<\frac{p}{2}\\ (\frac{c}{p})=+1}}\sgn\(1-\cot\frac{2c\pi}{p}\)
-\frac{\pi}{2}\sum\limits_{\substack{\frac{p}{2}<c<p\\ (\frac{c}{p})=+1}}\sgn\(1-\cot\frac{2c\pi}{p}\)\pmod{2\pi\Z}.
\end{equation*}
Via computation, we have
\begin{equation}\label{equation of the argument when n is a prime}
\Arg(\tau_{p+2}(S_p(i)))\equiv \frac{3p+5}{8}\pi+\alpha(p)\pi\pmod{2\pi\Z}.
\end{equation}
From the above we have
$$\Arg(S_p(i))\equiv\begin{cases}(3p+5)\pi/8+\alpha(p)\pi\pmod{2\pi\Z}&\mbox{if}\ (\frac{2}{p})=+1,
\\\\(-3p-1)\pi/8+\alpha(p)\pi\pmod{2\pi\Z}&\mbox{if}\ (\frac{2}{p})=-1.\end{cases}$$
By (\ref{equation of the square of Sp(i)}) and the above equality, we have
$$S_p(i)=\begin{cases}\zeta_8^{(3p+5)/2}\cdot(-1)^{\alpha(p)}\cdot\ve_{4p}^{-h(4p)/2}&\mbox{if}\ p\equiv7\pmod8,
\\\\\zeta_8^{(-3p-1)/2}\cdot(-1)^{\alpha(p)}\cdot\ve_{4p}^{-h(4p)/2}&\mbox{if}\ p\equiv3\pmod8.\end{cases}$$
We observe that
$$\alpha(p)=
\left\lfloor\frac{p}{8}\right\rfloor+\#\{\frac{p}{8}<c<\frac{3p}{8}: \(\frac{c}{p}\)=-1\}
=\left\lfloor\frac{p}{8}\right\rfloor+\beta(p)-1.$$
When $p\equiv7\pmod8$, we set $p=8k+7$. In this case we have
\begin{align*}
S_p(i)&=\zeta_8^{12(k+1)+1}\cdot(-1)^{k+\beta(p)-1}\cdot\ve_{4p}^{-h(4p)/2}
\\&=(-1)^{\beta(p)}\cdot\ve_{4p}^{-h(4p)/2}\cdot(1+i)/\sqrt{2}.
\end{align*}
When $p\equiv3\pmod8$, we let $p=8k+3$. In this case we have
\begin{align*}
S_p(i)&=\zeta_8^{-4(3k+1)-1}\cdot(-1)^{k+\beta(p)-1}\cdot\ve_{4p}^{-h(4p)/2}
\\&=(-1)^{\beta(p)}\cdot\ve_{4p}^{-h(4p)/2}\cdot(1-i)/\sqrt{2}.
\end{align*}
In view of the above, we complete the proof.\qed

Now we are in the position to prove Corollary \ref{Corollary A} which extends S. Chowla's result \cite{Chowla}.

\noindent{\bf Proof of Corollary\ \ref{Corollary A}}. Let notations be as in the proof of Theorem \ref{Theorem A}.
Recall that $\ve_{4p}=u_p+v_p\sqrt{p}>1$.
As $$u_p^2-pv_p^2=+1,$$
we may set $u_p\equiv \iota\pmod p$ for some $\iota\in\{\pm1\}$. It is well known that $h(4p)$ is odd
(readers may consult \cite{Brown} for more details on the class number of real quadratic field).
Thus we have
$$u_p\equiv u_p^{h(4p)}\equiv A_p\pmod p.$$
Hence by (\ref{equation which will be used in the proof of corollary}), we have
$$a_p^2\equiv 2A_p\equiv 2u_p\equiv 2\iota\pmod p.$$
This shows that $2\iota$ is a quadratic residue modulo $p$. Hence
$$u_p\equiv \(\frac{2}{p}\)=(-1)^{\frac{p+1}{4}}\pmod p.$$
This completes the proof.\qed

\noindent{\bf Proof of Corollary\ \ref{corollary of EAACC}}. Let notations be as in the proof of
Theorem \ref{Theorem A}. By (\ref{equation of the square of Sp(i)}) and (\ref{equation O}) we have
$$(a_p+b_p\sqrt{p})^2=2(u_p-v_p\sqrt{p})^{h(4p)}.$$
Hence we have
\begin{equation}\label{equation of corollary EAACC i}
2a_pb_p\sqrt{p}=-2v_p\sqrt{p}\sum_{k=0}^{\infty}(v_p\sqrt{p})^{2k}u_p^{h(4p)-2k-1}
\binom {h(4p)}{2k+1}.
\end{equation}
By Corollary \ref{Corollary A} we know that $u_p\equiv(-1)^{(p+1)/4}\pmod p$. It is also well known that $h(4p)<p$ and $2\nmid h(4p)$. From the above we have
\begin{equation}\label{equation of corollary EAACC ii}
a_pb_p\equiv-v_p\cdot h(4p)\pmod p.
\end{equation}
Clearly $p\nmid a_p$. Our desired result now follows from (\ref{equation of corollary EAACC ii}). \qed
\maketitle

\acknowledgment\ This research was supported by the National Natural Science Foundation of
China (Grant No. 11971222).

\end{document}